\definecolor{darkred}{rgb}{1,0,0} %can change the intensity in [0,1]
\definecolor{darkgreen}{rgb}{0,0.6,0}
\definecolor{darkblue}{rgb}{0,0,.8}
\def\reflb#1#2{\begingroup
    #2%
    \def\@currentlabel{#2}%
    \phantomsection\label{#1}\endgroup
}
\numberwithin{equation}{section}
\newtheorem {Theorem}{Theorem}
\numberwithin{Theorem}{section}
\newtheorem {Lemma}[Theorem]    {Lemma}
\theoremstyle{definition}
\theoremstyle{remark}
\newtheorem{Remark}[Theorem]{Remark}
\def    \F      {{\mathbb F}}
\def    \C      {{\mathbb C}}
\def    \R      {{\mathbb R}}
\def    \Z      {{\mathbb Z}}
\def    \12    {{\frac{1}{2}}}
\def    \im     {\operatorname{im}}
\def    \GL     {\operatorname{GL}}
\begin{document}

%%%%%%%%%%%%%%%%%%%%%%%%%%%%%%
%   TEXT FORMATTING

\setlength{\smallskipamount}{6pt}
\setlength{\medskipamount}{10pt}
\setlength{\bigskipamount}{16pt}

%%%%%%%%%%%%%%%%%%%%%%%%%%

%%%%%%%%%%%%%%%%%%%%%%%%%%

%%%%%%%%%%%           BEGINNING OF  TEXT

%%%%%%%%%%%%%%%%%%%%%%%%%%

\title[Random Chain Complexes]{Random Chain Complexes}

\author[Viktor Ginzburg]{Viktor L. Ginzburg}
\author[Dmitrii Pasechnik]{Dmitrii V. Pasechnik}

\address{DP: Department of Computer Science, University of Oxford,
Wolfson Building, Parks Road, Oxford, OX1 3QD, UK} \email{dmitrii.pasechnik@cs.ox.ac.uk}

\address{VG: Department of Mathematics, UC Santa Cruz, Santa Cruz, CA
  95064, USA} \email{ginzburg@ucsc.edu}

\subjclass[2010]{05E99, 55U15, 53D99, 60D99} 

\keywords{Random chain complexes, homology, Floer theory}

\date{\today} 

\thanks{The work is partially supported by NSF grant DMS-1308501 (VG)
  and the EU Horizon 2020 research and innovation programme, grant
  agreement OpenDreamKit No 676541 (DP)}

\bigskip

\begin{abstract}
  We study random, finite-dimensional, ungraded chain complexes
  over a finite field and show that for a uniformly distributed
  differential a complex has the smallest possible homology with the
  highest probability: either zero or one-dimensional homology depending on the
  parity of the dimension of the complex. We prove that as the order
  of the field goes to infinity the probability distribution
  concentrates in the smallest possible dimension of the homology. On
  the other hand, the limit probability distribution, as the dimension
  of the complex goes to infinity, is a super-exponentially decreasing, but
  strictly positive, function of the dimension of the homology.
\end{abstract}

\maketitle

%\tableofcontents

\section{Introduction}
\label{sec:intro} 
We study random, finite-dimensional, ungraded chain complexes over a finite
field and we are interested in the probability that such a complex has
homology of a given dimension. We show that for a uniformly
distributed differential the complex has the smallest possible
homology with the highest probability.

To be more specific, consider an $n$-dimensional vector space $V$
over a finite field $\F=\F_q$ of order $q$ and let $D$ be a 
differential on $V$, i.e., a linear operator $D\colon V\to V$ with
$D^2=0$. We are interested in the probability $p_r(q,n)$ with which a
chain complex $(V,D)$ has homology $\ker D/\im D$ of a given dimension $r$
for fixed $n$ and $q$. The differential $D$ is uniformly distributed
and $p_r(q,n)$ is simply the ratio $c_r(q,n)/c(q,n)$, where $c_r(q,n)$
is the number of complexes with $r$-dimensional homology (see Theorem
\ref{thm:number}) and $c(q,n)$ is the number of all complexes.

We mainly focus on large complexes, i.e., on the limits as $q$ or $n$
go to infinity. Clearly, $r$ and $n$ must have the same parity and
we separately analyze the asymptotic behavior of the sequence
$p_0(q,n), p_2(q,n),\ldots $, where $n$ is even, and
the sequence $p_1(q,n), p_3(q,n),\ldots $ for $n$ odd. 

As $q\to\infty$ with $n$ fixed, the probability concentrates in the
lowest possible dimension, i.e., $p_0(q,n)\to 1$ or $p_1(q,n)\to 1$
depending on the parity of $n$, while $p_r(q,n)\to 0$ for $r>1$. This
is consistent with the observation that over $\C$ and even $\R$ (see
Lemma \ref{lemma:D}) a generic complex has $0$- or $1$-dimensional
homology, i.e., that such complexes form the highest dimensional
stratum in the variety of all $n$-dimensional complexes. Indeed, one
can expect the probability distributions for large $q$ to approximate
the generic situation in zero characteristic. We do not know, however,
if the density functions converge in any sense as $q\to\infty$ to some
probability density on the variety of $n$-dimensional complexes over,
e.g., $\R$.

When $q$ and $r$ are fixed and $n\to\infty$ through either even or odd
integers depending on the parity of $r$, the situation is more
subtle. In this case, all limit probabilities
$p_r(q)=\lim_{n\to\infty}p_r(q,n)$ are positive. However, the
sequences $p_0,p_2,\ldots$ and $p_1,p_3,\ldots$ are super-exponentially
decreasing and for a large $q$ all terms in these sequences but the
first one are very close to zero while the first is then, of course,
close to 1. When $q=2$ and $r$ is even, we have $p_0\approx 0.6$,
$p_2\approx 0.4$, $p_4\approx 0.0075$ and other terms are very
small. We explicitly calculate the ratios $p_r(q)/p_0(q)$ and
$p_r(q)/p_1(q)$ and $p_0$ and $p_1$ in Theorem \ref{thm:prob}.

The proofs of these facts are elementary and quite simple. However, we
have not been able to find our results in the literature or any
probability calculations in this basic case where chain
complexes are stripped of all additional structures including a
grading. In contrast, random complexes of geometrical origin and
underlying random geometrical and topological objects have been
studied extensively and from various perspectives. Among such random
objects are, for instance, random simplicial complexes of various
types (see \cite{ALLM,BK,CF14,CF15,Ka,Me,MW,PS,YSA} and references
therein) and random Morse functions (see, e.g.,
\cite{Ar06,Ar07,CHKW,Ni}).

These works utilize several models of randomness all of which appear
to be quite different from the one, admittedly rather naive, used
here. This makes direct comparison difficult. One way to interpret our
result is that, for a large complex, sufficiently non-trivial homology
is indicative of the presence of some structure, a constraint limiting
randomness. Note that such a structure can be as simple as a
$\Z$-grading confined to a fixed range of degrees. A dimensional
constraint of this type is usually inherent in geometrical 
complexes, and it would be interesting to analyze its effect (if any)
on the probability distribution in our purely algebraic
setting. Another consequence of the result is that the assertion that
a complex has large homology carries more information than the
assertion that it has small homology.

The main motivation for our setting comes from Hamiltonian Floer
theory for closed symplectic manifolds; see, e.g., \cite{Sa} and
references therein. A Hamiltonian diffeomorphism is the time-one map
of the isotopy generated by a time-dependent Hamiltonian. To such a
diffeomorphism one can associate a certain complex, called the Floer
complex, generated by its fixed points or, equivalently, the
one-periodic orbits of the isotopy. Hence the dimension of the Floer
homology gives a lower bound for the number of one-periodic orbits.
The homology is independent of the Hamiltonian
diffeomorphism. In addition, one can fix the free homotopy class of
the orbits. (This construction is similar to Morse theory and, in
fact, Floer theory is a version of Morse theory for the action
functional.)

In many instances, e.g., often generically or for all symplectic
manifolds with vanishing first Chern class such as tori, the
dimension of the Floer complex grows with the order of iteration of
the diffeomorphism; see \cite{GG:CC-survey}. In other words, the
complex gets larger and larger as time in this discrete dynamical
system grows.  Moreover, the differential in the complex is usually
impossible to describe explicitly, and hence it makes sense to compare
the behavior of the complex and its homology with the generic or
random situation.  The Floer homology for contractible periodic orbits
is isomorphic to the homology of the underlying manifold. Therefore, by
our result, even though the Floer complex appears to be very ``noisy''
for large iterations and random on a bounded action scale, it has
large homology groups and is actually very far from random. For
non-contractible orbits, the dimension of the Floer complex is also
known to grow in many settings; see \cite{GG:nc,Gu:nc}. However, in this
case the Floer homology is zero and the complex may well be close to
random.  Note also that in some instances the Floer complex is
$\Z$-graded, but the grading is not supported within any specific
interval of degrees. Moreover, in contrast with geometrical random
complexes, the grading range of the Floer homology usually grows with
the order of iteration (\cite{SZ}), and while it is not clear how to
correctly account for an unbounded grading in a random model, such a
grading is unlikely to affect the probability distribution.

One aspect of Floer theory which is completely ignored in our model is
the action filtration. This filtration is extremely important and, in
particular, it allows one to treat Floer theory in the context of
persistent homology and topological data analysis; see
\cite{Ca,Gh}. This connection has recently been explored in
\cite{PoS,UZ}.  However, it is not entirely clear how
to meaningfully incorporate the action filtration into our model.

\subsection*{Acknowledgments}
The authors are grateful to Robert Ghrist, Ba\c sak G\"urel, Jiang-Hua
Lu, Roy Meshulam and Leonid Polterovich for useful discussions and
comments.  The authors would also like to thank the referee for
pointing out \cite[Lemma 5]{Ko} to them. A part of this work was carried out
while the second author was visiting the Simons Institute for the
Theory of Computing and he would like to thank the institute for its
warm hospitality.

\section{Main Results}
\label{sec:results} 

Let, as in the introduction, $(V,D)$ be an ungraded $n$-dimensional
chain complex with differential $D$ over a finite field $\F=\F_q$ of order
$q$. In other words, $V=\F^n$ and $D$ is a linear operator on $V$ with
$D^2=0$. We denote by $c(q,n)$ the number of such complexes, i.e., the
number of differentials $D$. The dimension $r$ of the homology
$\ker D/\im D$ has the same parity as $n$ and we let $c_r(q,n)$ be the
number of complexes with homology of dimension $r$. (In what follows,
we always assume that $r$ and $n$ have the same parity.) 
Clearly,
$$
c(q,n)=c_0(q,n)+c_2(q,n)+\cdots+c_{n}(q,n)
$$ 
when $n$ is even and 
$$
c(q,n)=c_1(q,n)+c_3(q,n)+\cdots+c_{n}(q,n)
$$ 
when $n$ is odd.

Furthermore, denote by 
$$
p_r(q,n)=\frac{c_r(q,n)}{c(q,n)}
$$
the probability (with respect to the
uniform distribution) of a complex to have $r$-dimensional
homology. Our main result describes the behavior of $p_r(q,n)$ as the
size of the complex, i.e., $q$ or $n$, goes to infinity.

\begin{Theorem}
\label{thm:prob}
Let $p_r(q,n)$ be as above.
\begin{itemize}
\item[(i)] 
For a fixed $n$, we have 
$$
\lim_{q\to\infty} p_r(q,n)=0\textrm{ when $r>1$},
$$
and $p_0(q,n)\to 1$ when $n$ is even and $p_1(q,n)\to 1$ when $n$ is
odd as $q\to\infty$.
\item[(ii)] 
For a fixed $q$ and $r$, the limits
$$
p_r(q)=\lim_{n\to\infty} p_r(q,n) 
$$
exist and $0<p_r(q)<1$ for all $q$ and $r$. Furthermore, when $r\geq 2$ is even, we have
\begin{equation}
\label{eq:lim}
\frac{p_r(q)}{p_0(q)}=\frac{q^{r/2}}{\prod\limits_{j=1}^r(q^j-1)}
\end{equation}
and
\begin{equation}
\label{eq:S}
p_0(q)=\frac{1}{1+S}, \textrm{ where } S=\sum_{k=1}^\infty
\frac{q^{k}}{\prod\limits_{j=1}^{2k}(q^j-1)}.
\end{equation}
When $r\geq 3$ is odd, 
$$
\frac{p_r(q)}{p_1(q)}=\frac{(q-1)q^{(r-1)/2}}{\prod\limits_{j=1}^r(q^j-1)}
$$
and
$$
p_1(q)=\frac{1}{1+S'}, \textrm{ where } S'=(q-1)\sum_{k=1}^\infty
\frac{q^{k}}{\prod\limits_{j=1}^{2k+1}(q^j-1)}.
$$
\end{itemize}
\end{Theorem}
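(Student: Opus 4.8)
The plan is to reduce everything to a count of differentials with prescribed rank, then pass to the limits. Write $\rk D = k$. Since $\im D \subseteq \ker D$ (because $D^2 = 0$), we have $\dim\ker D = n-k \geq k$, so $\dim H = (n-k) - k = n - 2k$; thus a complex has $r$-dimensional homology iff $\rk D = (n-r)/2$. Hence $c_r(q,n)$ is exactly the number of $D\in\operatorname{End}(\F^n)$ with $D^2=0$ and $\rk D = (n-r)/2$. I expect Theorem~\ref{thm:number} to supply a closed formula for this count — the standard one being that the number of nilpotent operators of rank $k$ on $\F_q^n$ with square zero equals the number of ways to choose $\im D$ (a $k$-plane inside the eventual $\ker D$), then $\ker D \supseteq \im D$ of dimension $n-k$, then a surjection $\F^n/\ker D \to \im D$, giving a product of $q$-binomial coefficients times $|\GL_k(\F_q)|$. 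Concretely $c_r(q,n) = \binom{n}{k}_q \binom{n-k}{k}_q \,|\GL_k(\F_q)|$ with $k=(n-r)/2$, where $\binom{\cdot}{\cdot}_q$ is the Gaussian binomial; I will take whatever normalization of this appears in Theorem~\ref{thm:number} as given.

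\medskip

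For part (i), fix $n$ and form the ratio $p_r(q,n)/p_0(q,n) = c_r(q,n)/c_0(q,n)$ (or $/c_1(q,n)$ when $n$ is odd), which by the formula above is a ratio of products of factors of the form $q^a-1$ and $|\GL_k|$. The point is that $c_0$ (resp.\ $c_1$) corresponds to the \emph{maximal} rank $k = \lfloor n/2\rfloor$, and increasing $r$ by $2$ decreases $k$ by $1$; each such step multiplies the count by a factor whose degree in $q$ (as a rational function) is strictly negative — I will compute that degree and check it is negative, which forces $c_r(q,n)/c_0(q,n)\to 0$ as $q\to\infty$ for every $r>1$. Since the finitely many ratios $c_r/c_0$ all tend to $0$, normalizing by $c(q,n)=\sum_r c_r(q,n)$ gives $p_0(q,n)\to 1$ and $p_r(q,n)\to 0$ for $r\geq 2$ (even $n$), and symmetrically for odd $n$.

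\medskip

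For part (ii), fix $q$ and $r$ (say $r$ even, the odd case being identical \emph{mutatis mutandis}) and let $n\to\infty$ through integers of the right parity, so $k = (n-r)/2 \to \infty$ as well. I will show each ratio
$$
\frac{c_r(q,n)}{c_0(q,n)} = \frac{p_r(q,n)}{p_0(q,n)}
$$
converges: writing it out as a product of Gaussian binomials and $|\GL|$-factors and letting $n\to\infty$, every factor of the shape $\frac{q^{n-a}-1}{q^{n-b}-1}$ tends to $q^{b-a}$, while the finitely many "low" factors $q^j-1$ ($j=1,\dots,r$) survive unchanged; collecting these gives exactly the claimed limit $q^{r/2}/\prod_{j=1}^{r}(q^j-1)$ in \eqref{eq:lim}. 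To get $p_0(q)$ itself I will use $1 = \sum_{r} p_r(q,n)$, divide by $p_0(q,n)$, pass to the limit term-by-term — justified by a dominated-convergence estimate showing $p_r(q,n)/p_0(q,n) \leq C\, q^{-r^2/4}$ uniformly in $n$, so the series converges and interchanges with the limit — obtaining $1/p_0(q) = 1 + \sum_{k\geq 1} q^{k}/\prod_{j=1}^{2k}(q^j-1) = 1+S$, which is \eqref{eq:S}. The same super-exponential bound gives $0 < p_r(q) < 1$. The odd case differs only in that the maximal rank is $(n-1)/2$, producing the extra factor $(q-1)$ in the numerator and the odd product $\prod_{j=1}^{2k+1}$ in $S'$.

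\medskip

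\textbf{The main obstacle} I anticipate is not any single step but the bookkeeping: correctly identifying which $q$-factors cancel between $c_r(q,n)$ and $c_0(q,n)$ and which persist in the $n\to\infty$ limit, and producing a clean uniform-in-$n$ upper bound $p_r(q,n)/p_0(q,n) \leq C q^{-r^2/4}$ strong enough to justify the term-by-term passage to the limit in $\sum_r p_r(q,n)=1$ (this super-exponential decay is also what yields the "super-exponentially decreasing but strictly positive" statement in the abstract). Getting the exponents exactly right so that the telescoping of Gaussian binomials reproduces the precise formulas in \eqref{eq:lim} and \eqref{eq:S} — rather than something off by a power of $q$ — is the delicate part; everything else is routine manipulation of $q$-analogues.
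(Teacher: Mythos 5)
Your proposal follows essentially the same route as the paper: take the closed formula for $c_r(q,n)$ from Theorem \ref{thm:number} as given, form the ratios $c_r(q,n)/c_0(q,n)$ (resp.\ $/c_1$), extract the asymptotics in $q$ for part (i) and in $n$ for part (ii), and recover $p_0(q)=1/(1+S)$ from the normalization $\sum_r p_r(q,n)=1$ via a uniform super-exponential bound on the ratios (the paper's explicit ratio formula \eqref{eq:ratio} immediately supplies the dominating bound you ask for, since the finite product of factors $1-q^{-(m+j)}$ is at most $1$). Your alternative expression for $c_r$ as $\binom{n}{k}_q\binom{n-k}{k}_q|\GL_k(q)|$ is correct and equal to the paper's centralizer formula, so this is only a cosmetic difference.
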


The proof of this theorem is based on an explicit calculation of
$c_r(q,n)$. To state the result, denote by $\GL_k(q)$ the general
linear group of $k\times k$ invertible matrices over $\F_q$ and recall that
$$
|\GL_k(q)|=q^{k(k-1)/2}\prod_{j=1}^k(q^j-1).
$$  
Then we have the following particular case of \cite[Lemma 5]{Ko}.

\begin{Theorem}[Kovac, \cite{Ko}]
\label{thm:number}
Let as above $c_r(q,n)$ be the number of $n$-dimensional complexes
over $\F_q$ with homology of dimension $r$. Then
\begin{equation}
\label{eq:number}
c_r(q,n)=\frac{|\GL_n(q)|}{|\GL_m(q)|\cdot |\GL_r(q)|\cdot
  q^{2mr+m^2}}\, ,
\end{equation}
where $2m+r=n$.
\end{Theorem}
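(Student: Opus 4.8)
The plan is to exhibit an explicit bijection between the differentials $D$ on $V=\F^n$ satisfying $D^2=0$ and $\dim(\ker D/\im D)=r$, on the one hand, and certain triples consisting of a partial flag together with an isomorphism, on the other, and then to count the triples by an orbit--stabilizer argument for $\GL_n(q)$ acting on flags.

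First, suppose $D^2=0$ and $\dim(\ker D/\im D)=r$. Since $D^2=0$ we have $\im D\subseteq\ker D$; setting $m:=\rk D$, rank--nullity gives $\dim\ker D=n-m$, hence $r=(n-m)-m$, i.e.\ $2m+r=n$. To such a $D$ I associate the triple $(F_1,F_2,\phi)$ with $F_1=\im D$ and $F_2=\ker D$ (so that $F_1\subseteq F_2\subseteq V$, $\dim F_1=m$, $\dim F_2=m+r$), and $\phi\colon V/F_2\to F_1$ the map induced by $D$; it is well defined since $F_2=\ker D$ and is an isomorphism since it is injective and $\dim(V/F_2)=m=\dim F_1$. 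Conversely, given a flag $F_1\subseteq F_2\subseteq V$ with $\dim F_1=m$, $\dim F_2=m+r$, together with an isomorphism $\phi\colon V/F_2\to F_1$, set $D$ equal to the composite $V\twoheadrightarrow V/F_2\xrightarrow{\phi}F_1\hookrightarrow V$; one checks directly that $\ker D=F_2$ and $\im D=F_1$ (whence $\im D\subseteq\ker D$, i.e.\ $D^2=0$), that $\dim(\ker D/\im D)=r$, and that the two constructions are mutually inverse. Therefore $c_r(q,n)$ equals the number of flags of this type multiplied by $|\GL_m(q)|$, the number of isomorphisms between two $m$-dimensional $\F_q$-vector spaces.

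It remains to count the partial flags $F_1\subseteq F_2\subseteq\F_q^n$ with $\dim F_1=m$, $\dim F_2/F_1=r$, $\dim\F_q^n/F_2=m$. The group $\GL_n(q)$ acts transitively on them, and the stabilizer of a standard flag is a parabolic subgroup $P$ with Levi factor $\GL_m(q)\times\GL_r(q)\times\GL_m(q)$ and unipotent radical of order $q^{mr+m^2+rm}=q^{2mr+m^2}$; hence the number of flags equals $|\GL_n(q)|/\bigl(|\GL_m(q)|^2\,|\GL_r(q)|\,q^{2mr+m^2}\bigr)$, and multiplying by $|\GL_m(q)|$ yields exactly \eqref{eq:number}. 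Alternatively one can bypass parabolic subgroups: the number of such flags is the product of Gaussian binomial coefficients $\binom{n}{m+r}_q\binom{m+r}{m}_q$, and substituting $|\GL_k(q)|=q^{k(k-1)/2}\prod_{j=1}^k(q^j-1)$ and simplifying gives the same answer.

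I expect no serious obstacle here: the whole content is the bijection, and the remainder is bookkeeping. The two points that need genuine care are verifying that the flag-and-isomorphism data recovers $D$ exactly — in particular that $\ker D$ and $\im D$ come out to be the prescribed subspaces, not merely related to them by inclusion — and correctly computing the order of $P$ (equivalently, the number of flags). It is also worth recording the conceptual reformulation: all square-zero operators of rank $m$ on $V$ are $\GL_n(q)$-conjugate to the standard nilpotent $D_0$ of Jordan type $(2^m,1^r)$, so $c_r(q,n)=|\GL_n(q)|/|Z_{\GL_n(q)}(D_0)|$, and \eqref{eq:number} is then precisely the statement that $|Z_{\GL_n(q)}(D_0)|=|\GL_m(q)|\,|\GL_r(q)|\,q^{2mr+m^2}$.
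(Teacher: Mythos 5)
Your argument is correct, and it reaches \eqref{eq:number} by a genuinely different route than the paper. The paper first invokes Lemma \ref{lemma:D} to put $D$ into the normal form $D_r$ with Jordan type $(2^m,1^r)$, identifies the set of square-zero operators with $r$-dimensional homology with the conjugacy class $\GL_n(q)/C_r$, and then computes the centralizer order $|C_r|=|\GL_m(q)|\cdot|\GL_r(q)|\cdot q^{2mr+m^2}$ by writing out the commutation relation $XD_r=D_rX$ in $3\times 3$ block form. You instead parametrize such $D$ by the triple $\bigl(\im D,\ \ker D,\ \phi\colon V/\ker D\to\im D\bigr)$ and count flags of type $(m,r,m)$ by orbit--stabilizer for the parabolic (or, equivalently, by the product $\binom{n}{m+r}_q\binom{m+r}{m}_q$ of Gaussian binomials), then multiply by $|\GL_m(q)|$ for the choice of $\phi$; the arithmetic checks out and gives \eqref{eq:number}. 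The trade-off: your main argument bypasses the normal-form lemma entirely (transitivity of $\GL_n(q)$ on flags of a fixed type is elementary), is coordinate-free, and makes the shape of the answer transparent, whereas the paper's route yields the explicit description of the centralizer as a by-product and reuses Lemma \ref{lemma:D}, which the authors need anyway for the genericity discussion in the introduction. Your closing remark correctly identifies the two computations as equivalent: your flag count, divided into $|\GL_n(q)|$, is exactly the statement $|C_r|=|\GL_m(q)|\,|\GL_r(q)|\,q^{2mr+m^2}$, with the parabolic containing $C_r$ and the quotient accounted for by the second $\GL_m(q)$ factor.
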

Even though this result is not new, for the sake of completeness we
include its proof, which is very simple and short, in the next
section.

\begin{Remark}
  We do not have simple expressions for the probabilities $p_r(q,n)$
  and the total number of complexes $c(q,n)$. However, when $q=2$, the
  differentials $D$ are in one-to-one correspondence with involutions
  of $\F_2^n$. (An involution necessarily has the form $I+D$ and, as is
  easy to see, different differentials $D$ give rise to different
  involutions.) Hence, $c(2,n)$ is equal to the number of
  involutions. This number is expressed in \cite{FV} via a generating
  function and an asymptotic formula for $c(q,n)$ has been recently
  obtained in \cite{FGS}. It is possible that at least when $q=2$ our
  probability formulas can be further simplified using the results
  from those papers.
\end{Remark}

\section{Proofs}
\label{sec:proofs}

The proof of Theorem \ref{thm:number} is based on the observation that
the differential in a finite-dimensional complex over any field $\F$
can be brought to its Jordan normal form or, equivalently, a complex
over $\F$ can be decomposed into a sum of elementary complexes, i.e.,
into a sum of two-dimensional complexes with zero homology and
one-dimensional complexes. To be more precise, we have the following
elementary observation.

\begin{Lemma}
\label{lemma:D}
Let $V$ be a finite-dimensional vector space over an arbitrary field
$\F$ and let $D\colon V\to V$ be an operator with $D^2=0$. Then, in some basis,
$D$ can be written as a direct sum of $1\times 1$ and $2\times 2$
Jordan blocks with zero eigenvalues.
\end{Lemma}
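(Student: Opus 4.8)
The plan is to decompose $V$ using the two natural subspaces attached to $D$, namely $\ker D$ and $\im D$, and to exploit that $D^2 = 0$ forces $\im D \subseteq \ker D$. Concretely, I would first choose a complement $W$ to $\ker D$ in $V$, so that $V = \ker D \oplus W$, and observe that $D$ restricted to $W$ is injective with image $\im D \subseteq \ker D$. Next, since $\im D$ is a subspace of $\ker D$, I would pick a complement $U$ to $\im D$ inside $\ker D$, giving $\ker D = U \oplus \im D$, and hence
\[
V = U \oplus \im D \oplus W .
\]
On $U$ the operator $D$ vanishes, so $U$ contributes a direct sum of $1\times 1$ zero blocks.

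For the remaining part $\im D \oplus W$, the key point is that $D|_W\colon W \to \im D$ is a linear isomorphism. I would choose a basis $w_1,\dots,w_m$ of $W$ and set $e_i = D w_i \in \im D$; then $e_1,\dots,e_m$ is a basis of $\im D$, and in the ordered basis $e_1, w_1, e_2, w_2, \dots, e_m, w_m$ the operator $D$ sends $w_i \mapsto e_i$ and $e_i \mapsto D^2 w_i = 0$. Each pair $(e_i, w_i)$ thus spans a $D$-invariant plane on which $D$ is the single $2\times 2$ nilpotent Jordan block $\left(\begin{smallmatrix} 0 & 1 \\ 0 & 0 \end{smallmatrix}\right)$. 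Combining this basis of $\im D \oplus W$ with any basis of $U$ yields a basis of $V$ in which $D$ is block-diagonal with blocks of size $1$ and $2$, all with eigenvalue $0$, which is the assertion. (One should note $\dim V = \dim U + 2m$, so the number of $1\times 1$ blocks equals $\dim U = \dim\ker D - \dim\im D = \dim H$, consistent with parity considerations used later.)

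This argument uses nothing beyond the rank–nullity theorem, the existence of complementary subspaces in a vector space over an arbitrary field, and the inclusion $\im D \subseteq \ker D$; there is no real obstacle. If anything, the only point requiring a moment's care is verifying that the $e_i$ together with a chosen basis of $U$ really do form a basis of $\ker D$ — this is just the statement that $\ker D = U \oplus \im D$ together with $\{e_i\}$ being a basis of $\im D$ — and that the listed vectors are linearly independent in $V$, which follows from the directness of the decomposition $V = U \oplus \im D \oplus W$. Alternatively, one could invoke the classification of finitely generated modules over the PID $\F[t]$ applied to $t \mapsto D$ with the relation $t^2 = 0$, but the direct splitting above is shorter and entirely self-contained.
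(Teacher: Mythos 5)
Your argument is correct and is essentially the paper's own proof: both build the basis from a complement of $\im D$ inside $\ker D$ (your $U$, the paper's $f_1,\dots,f_r$), a basis $e_1,\dots,e_m$ of $\im D$, and preimages of the $e_i$ under $D$ (your $w_i$, the paper's $e_i'$), differing only in whether the preimages are chosen after the $e_i$ or obtained by pushing forward a basis of a complement $W$ of $\ker D$. The two constructions produce the same block decomposition, so there is nothing further to reconcile.
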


When $\F$ is algebraically closed, this follows immediately from the
Jordan normal form theorem. Hence, the emphasis here is on the fact
that the field $\F$ is immaterial. For the sake of completeness, we
outline a proof of the lemma. 

\begin{proof} Let us pick an arbitrary basis $\{e_1,\ldots, e_m\}$ of
  $\im D$ and extend it to a basis of $\ker D\supset \im D$ by adding elements
  $\{f_1,\ldots,f_r\}$. Furthermore, pick arbitrary vectors $e'_i$ with
  $De'_i=e_i$. Then $\{e'_1, e_1, \ldots, e'_m, e_m, f_1, \ldots, f_r\}$
  is the required basis of $V$.
\end{proof}

\begin{proof}[Proof of Theorem \ref{thm:number}]
  Let $D$ be a differential on an $n$-dimensional vector space $V$
  over a finite field $\F=\F_q$. Assume that the homology of the
  complex $(V,D)$ is $r$-dimensional. By Lemma \ref{lemma:D}, $D$ is
  conjugate to the map $D_r$ which is the direct sum of $r$
  $1\times 1$ zero blocks and $m$ $2\times 2$ Jordan blocks with zero
  eigenvalues, where $2m+r=n$.

  Let $C_r$ be the centralizer of $D_r$ in $\GL_n(q)$.  The complexes with
  $r$-dimensional homology are in one-to-one correspondence with
  $\GL_n(q)/C_r$. Thus, to prove \eqref{eq:number}, it suffices to
  show that
\begin{equation}
\label{eq:center}
|C_r|=|\GL_m(q)|\cdot |\GL_r(q)|\cdot
  q^{2mr+m^2} .
\end{equation}
The elements of $C_r$ are $n\times n$ invertible matrices
$X\in \GL_n(q)$ commuting with $D_r$. In what follows, it is
convenient to work with the basis
$e_1, \ldots, e_m, f_1, \ldots, f_r, e'_1,\ldots,e'_m$ in the notation
from the proof of Lemma \ref{lemma:D}. Thus we can think of $X$ as a
$3\times 3$-block matrix with $m\times m$ block $X_{11}$, the block
$X_{12}$ having size $m\times r$, and $X_{13}$ being again
$m\times m$, etc. In the same format, $D_r$ is then the matrix with
only one non-zero block. This is the top-right corner
$m\times m$-block, which is $I$. Then, as a straightforward
calculation shows, the commutation relation $XD_r=D_rX$ amounts to the
conditions that $X_{11}=X_{33}$, and $X_{21}=0$, $X_{31}=0$ and
$X_{32}=0$. In particular, $X$ is an upper block-triangular
matrix. Hence, $X$ is invertible if and only if $X_{11}=X_{33}$ and
$X_{22}$ are invertible. There are no constraints on the remaining blocks
$X_{12}$, $X_{13}$ and $X_{23}$. Now \eqref{eq:center} follows.
\end{proof}

\begin{proof}[Proof of Theorem \ref{thm:prob}]
Throughout the proof, we assume that $r$ and $n$ are even. The case
where these parameters are odd can be handled in a similar fashion.

As the first step, we express $p_r(q,n)/p_0(q,n)$
explicitly. Clearly,
$$
\frac{p_r(q,n)}{p_0(q,n)}=\frac{c_r(q,n)}{c_0(q,n)}=\frac{|C_0|}{|C_r|}.
$$
Using \eqref{eq:number} or \eqref{eq:center} and tidying up the
resulting expression, we have
\begin{equation*}
\begin{split}
\frac{p_r(q,n)}{p_0(q,n)} &=\frac{q^{n^2/4}\cdot
  q^{n(n/2-1)/4}\cdot \prod_{j=1}^{n/2}(q^j-1)}
{q^{2mr+m^2}\cdot q^{m(m-1)/2}\cdot q^{r(r-1)/2}\cdot\prod\limits_{j=1}^{r}(q^j-1)
\cdot\prod\limits_{j=1}^{m}(q^j-1)}
\\
&=\frac{\prod\limits_{j=m+1}^{m+r/2}(q^j-1)}
{q^{mr/2+r(r/2-1)/4}\cdot \prod\limits_{j=1}^{r}(q^j-1)
},
\end{split}
\end{equation*}
where as above $n=2m+r$ and $r\geq 2$, which we can then rewrite as
\begin{equation}
\label{eq:ratio}
\frac{p_r(q,n)}{p_0(q,n)}=\frac{q^{r/2}}
{\prod\limits_{j=1}^{r}(q^j-1)
}\cdot\prod\limits_{j=1}^{r/2}\left(1-\frac{1}{q^{m+j}}\right).
\end{equation}
Now it is clear that 
$$
\frac{p_r(q,n)}{p_0(q,n)}\sim q^{-r^2/2}\textrm{ as } q\to\infty
$$
with $r\geq 2$ and $n$ fixed. In particular, this ratio goes to zero as $q\to
\infty$. The number of the terms in the sum
$$
\sum_j p_j(q,n)=1
$$
with $j$ ranging through even integers from $0$ to $n$ is equal to
$n/2+1$ and thus this number is independent of $q$. Hence,
$p_0(q,n)\to 1$ and $p_r(q,n)\to 0$ when $r\geq 2$ as
$q\to\infty$. This proves the first assertion of the theorem.

To prove the second part, first note that by \eqref{eq:ratio}
\begin{equation}
\label{eq:lim-ratio}
\frac{p_r(q,n)}{p_0(q,n)}\to \frac{q^{r/2}}{\prod\limits_{j=1}^r(q^j-1)}
\end{equation}
as $m\to \infty$ or, equivalently, $n\to \infty$ with $r$ and $q$ fixed.

Furthermore, in a similar vein, it is not hard to show that
$$
\sum_{r>0} \frac{p_r(q,n)}{p_0(q,n)}\to S:=\sum_r
\frac{q^{r/2}}{\prod\limits_{j=1}^r(q^j-1)}\textrm{ as } n\to\infty,
$$
where, on the left, the sum is taken over all even integers from $2$ to $n$
and, on the right, the sum is over all even integers $r\geq 2$. Therefore, letting $n\to\infty$
in the identity
$$
1+\sum_{r>0} \frac{p_r(q,n)}{p_0(q,n)}=\frac{1}{p_0(q,n)},
$$
we conclude that the limit $p_0(q)=\lim_{n\to\infty}p_0(q,n)$ exists
and $p_0(q)=1/(1+S)$, which proves \eqref{eq:S}. Now, by
\eqref{eq:lim-ratio}, the limits $p_r(q)=\lim_{n\to\infty}p_r(q,n)$
for $r\geq 2$ also exist, and hence \eqref{eq:lim} holds. This
completes the proof of the theorem.
\end{proof}

\begin{Remark}
The sequence $c_r(q,n)$ is decreasing as a function of $r$. This
readily follows from \eqref{eq:ratio}.
\end{Remark}


\begin{thebibliography}{99}%{CKRTZ}

\bibitem{Ar06}
%\bibitem[Ar06]{Ar06}
V.I. Arnold,  Smooth functions statistics, \emph{Funct.\ Anal.\ Other
  Math.}, \textbf{1} (2006), 111--118. %no. 2, 

\bibitem{Ar07}
%\bibitem[Ar07]{Ar07}
V.I. Arnold,
Topological classification of Morse functions and generalisations of Hilbert's 16-th problem,
\emph{Math.\ Phys.\ Anal.\ Geom.}, \textbf{10} (2007), 227--236.  %no. 3, 

\bibitem{ALLM}
%\bibitem[AL$^2$M]{ALLM}
L. Aronshtam, N. Linial, T. \L uczak, R. Meshulam, Collapsibility and vanishing of top homology in random simplicial complexes, \emph{Discrete Comput.\ Geom.}, \textbf{49} (2013), 317--334.  %no. 2, 


\bibitem{BK}
%\bibitem[BK]{BK}
O. Bobrowski, M. Kahle, Topology of random geometric complexes: A
survey, Preprint arXiv:1409.4734.

\bibitem{Ca}
%\bibitem[Ca]{Ca}
 G. Carlsson, Topology and data, \emph{Bull.\ Am.\ Math.\ Soc.\
   (N.S.)}, \textbf{46} (2009), 255--308.  %(2):


\bibitem{CHKW}
%\bibitem[CHKW]{CHKW}
B. Collier, D. L. Hockensmith, E. Kerman, Y.-S. Wong, Realistic Morse
complexes, in preparation. 

\bibitem{CF14}
%\bibitem[CF14]{CF14}
A. Costa, M. Farber, Random simplicial complexes, Preprint
arXiv:1412.5805.

\bibitem{CF15}
%\bibitem[CF15]{CF15}
  A.E. Costa, M. Farber, Geometry and topology of random 2-complexes,
  \emph{Israel J. Math.}, \textbf{209} (2015), 883--927. %no. 2,

\bibitem{FGS}
J. Fulman, R. Guralnick, D. Stanton, Asymptotics of the number of
involutions in finite classical groups, Preprint arXiv:1602.03611.

\bibitem{FV}
J. Fulman, C.R. Vinroot, 
Generating functions for real character degree sums of finite general
linear and unitary groups,
\emph{J. Algebraic Combin.}, \textbf{40} (2014), 387--416.   %no. 2, 

\bibitem{GG:CC-survey} 
%\bibitem[GG-a]{GG:CC-survey} 
V.L. Ginzburg, B.Z. G\"urel,
The Conley conjecture and beyond, \emph{Arnold Math.\ J.}, \textbf{1}
(2015), 299--337. %doi: 10.1007/ s40598-015-0017-3.

\bibitem{GG:nc} 
%\bibitem[GG-b]{GG:nc} 
V.L. Ginzburg, B.Z. G\"urel,
Non-contractible periodic orbits in Hamiltonian dynamics on closed symplectic manifolds, Preprint arXiv:1503.07145; to appear in  \emph{Compos.\ Math.}

\bibitem{Gh}
%\bibitem[Gh]{Gh}
R. Ghrist, Barcodes: the persistent topology of data,
\emph{Bull.\ Am.\ Math.\ Soc.\
   (N.S.)}, \textbf{45} (2008), 61--75.  %(1):

\bibitem{Gu:nc} 
%\bibitem[G\"u]{Gu:nc} 
B.Z. G\"urel, 
On non-contractible periodic orbits of Hamiltonian diffeomorphisms,
\emph{Bull.\ Lond.\ Math.\ Soc.}, \textbf{45} (2013), 1227--1234.

\bibitem{Ka}
%\bibitem[Ka]{Ka}
M. Kahle, Random geometric complexes, \emph{Discrete Comput.\ Geom.},
\textbf{45} (2011), 553--573. %no.3 

\bibitem{Ko}
A. Kovacs, Some enumeration problems for matrices over a finite field, \emph{Linear Algebra
Appl.}, \textbf{94} (1987), 223--236.

\bibitem{Me} 
%\bibitem[Me]{Me} 
R. Meshulam, Bounded quotients of the fundamental
  group of a random 2-complex, Preprint arXiv:1308.3769.

\bibitem{MW}
%\bibitem[MW]{MW}
R. Meshulam, N. Wallach, Homological connectivity of random $k$-dimensional complexes, \emph{Random Structures Algorithms}, \textbf{34} (2009), 408--417. %no. 3, 

\bibitem{Ni}
%\bibitem[Ni]{Ni}
L.I. Nicolaescu, Random Morse functions and spectral geometry,
Preprint  arXiv:1209.0639.

\bibitem{PS}
%\bibitem[PS]{PS}
N. Pippenger, K. Schleich, 
Topological characteristics of random triangulated surfaces,
\emph{Random Structures Algorithms}, \textbf{28} (2006), 247--288.  %no. 3, 

\bibitem{PoS}
%\bibitem[PoS]{PoS}
L. Polterovich, E. Shelukhin,
Autonomous Hamiltonian flows, Hofer's geometry and persistence
modules, Preprint arXiv:1412.8277.

\bibitem{Sa}
%\bibitem[Sa]{Sa}
D.A. Salamon,
Lectures on Floer homology, in \emph{Symplectic Geometry and
Topology}, IAS/Park City Math.\ Ser., vol.\ 7, Amer.\ Math.\ Soc.,
Providence, RI, 1999, 143--229.

\bibitem{SZ}
%\bibitem[SZ]{SZ}
D. Salamon, E. Zehnder,
Morse theory for periodic solutions of Hamiltonian systems and the
Maslov index, \emph{Comm.\ Pure Appl.\ Math.}, \textbf{45} (1992),
1303--1360.

\bibitem{UZ}
%\bibitem[UZ]{UZ}
M. Usher, J. Zhang, 
Persistent homology and Floer--Novikov theory, Preprint
arXiv:1502.07928.

\bibitem{YSA}
%\bibitem[YSA]{YSA}
D. Yogeshwaran, E. Subag, R.J. Adler,
Random geometric complexes in the thermodynamic regime, Preprint arXiv:1403.1164.


\end{thebibliography}
\end{document}